\theoremstyle{plain}
\newtheorem{theorem}{Theorem}[section]
\newtheorem{cor}[theorem]{Corollary}
\theoremstyle{definition}
\numberwithin{equation}{section}
\def\imod#1{\allowbreak\mkern5mu({\operator@font mod}\,\,#1)}
\begin{document}

\title[Ap{\'e}ry-like numbers and families of CM newforms]
{Ap{\'e}ry-like numbers and families of\\ newforms with complex multiplication}

\author{Alexis Gomez, Dermot McCarthy, Dylan Young}

\address{Department of Mathematics $\&$ Statistics, Texas Tech University, Lubbock, TX 79410-1042, USA}

\email{Alexis.Gomez@ttu.edu}

\email{dermot.mccarthy@ttu.edu}

\email{Dylan.Young@ttu.edu}

\subjclass[2010]{Primary: 11F11, 11B83; Secondary: 11A07}

\begin{abstract}
Using Hecke characters, we construct two infinite families of newforms with complex multiplication, one by $\mathbb{Q}(\sqrt{-3})$ and the other by $\mathbb{Q}(\sqrt{-2})$. The values of the $p$-th Fourier coefficients of all the forms in each family can be described by a single formula, which we provide explicitly.  This allows us to establish a formula relating the $p$-th Fourier coefficients of forms of different weights, within each family. We then prove congruence relations between the $p$-th Fourier coefficients of these newforms at all odd weights and values coming from two of Zagier's sporadic Ap{\'e}ry-like sequences.
\end{abstract}

\maketitle


\section{Introduction and Statement of Results}\label{sec_Intro}

In 1978, Ap{\'e}ry \cite{Ap} provided quite an unexpected proof of the irrationality of $\zeta(3)$. His methods also yielded another proof of the irrationality of $\zeta(2)$. The sequences 
\begin{equation}\label{AperyNumbers}
  A(n) = \sum_{k=0}^{n} \binom{n}{k}^2 \binom{n+k}{k}, \quad \quad \quad \quad B(n) = \sum_{k=0}^{n} \binom{n}{k}^2 \binom{n+k}{k}^2
\end{equation}
arose in those proofs and are now commonly referred to as the Ap{\'e}ry numbers. Since then, there has been substantial interest in both the intrinsic arithmetic properties of the Ap{\'e}ry numbers and their relationship to modular forms. For example, consider the unique newform in $S_{3}(\Gamma_{0}(16),(\tfrac{-4}{\cdot}))$, which has complex multiplication (CM) by $\mathbb{Q}(\sqrt{-1})$,
\begin{equation*}
h(z) = q \prod_{n\ge1} (1 - q^{4n})^6 =: \sum_{n=1}^{\infty} \alpha(n) q^n,
\end{equation*}
where
$q:=e^{2\pi i z}$ and $z \in \mathbb{H}$.
Ahlgren \cite{A} proved that, for all primes $p \geq 5$, 
\begin{equation} \label{superapery1}
  A\Bigl(\frac{p-1}{2}\Bigr) \equiv \alpha(p) \pmod{p^2},
\end{equation}
thus confirming a conjecture in \cite{SB}. A similar result applies to the $B(n)$ sequence and a newform in $S_{4}(\Gamma_{0}(8))$ \cite{AO}.

Inspired by Ap{\'e}ry, Beukers \cite{Be} gave another proof of the irrationality of $\zeta(2)$ and $\zeta(3)$ by considering certain families of integrals which satisfy $\mathbb{Q}$-linear relations with the respective zeta values. The  Ap{\'e}ry numbers appear as coefficients in these relations. More recently, Brown \cite{B} introduced {\it cellular integrals} which can be expressed as $\mathbb{Q}$-linear combinations of multiple zeta values, and include Beukers integrals as a special cases.
In \cite{MOS}, the authors, of whom the second author of this paper is one, examine sequences arising from the coefficients in these linear combinations. They show that all powers of the Ap{\'e}ry numbers are among these sequences and that they too satisfy congruence relations with Fourier coefficients of modular forms which have complex multiplication by $\mathbb{Q}(\sqrt{-1})$, similar to (\ref{superapery1}). To do this, they construct, using Hecke characters, an infinite family of modular forms, which have complex multiplication by $\mathbb{Q}(\sqrt{-1})$, and whose $p$-th Fourier coefficients can be described by a single formula. Specifically, for every positive integer $k \geq 2$, there exists a weight $k$ CM newform $h_k(z) =: \sum_{n=1}^{\infty} \alpha_k(n) q^n \in S_k(\Gamma_0(N_k), \chi_k)$ such that
$$\alpha_k(p)
=
\begin{cases}
(-1)^{\frac{(x+y-1)(k-1)}{2}} \left[(x+iy)^{k-1} + (x-iy)^{k-1} \right], & \text{if $p \equiv 1 \imod 4$, $p= x^2+y^2$, $x$ odd},\\
0, & \text{if $p \equiv 3 \imod 4$}.
\end{cases}
$$
The level $N_k$ and character $\chi_k$ are explicitly stated and depend only the congruence class of $k$ modulo 4.
Using this formula for the $p$-th Fourier coefficients, nice congruence relations between Fourier coefficients of forms of different weight can be found. For example,
\begin{equation}\label{for_alphap2}
 \alpha_3(p)^s 
\equiv \alpha_{t}(p) \pmod{p^2},
\end{equation}
where $t=2s+1$. Then, using (\ref{superapery1}), we get that 
\begin{equation}\label{superapery2}
 A\Bigl(\frac{p-1}{2}\Bigr)^s 
\equiv \alpha_{t}(p) \pmod{p^2}.
\end{equation}

In this paper, we perform similar exercises with, firstly, a family of modular forms with complex multiplication by $\mathbb{Q}(\sqrt{-3})$ and the sequence 
\begin{equation}\label{def_Cn}
C(n) = \sum_{k=0}^{n} \binom{n}{k}^2 \binom{2k}{k},
\end{equation}
and then with a family of modular forms with complex multiplication by $\mathbb{Q}(\sqrt{-2})$ and the sequence 
\begin{equation}\label{def_Dn}
D(n) = \sum_{k=0}^{n} \binom{n}{k}^3.
\end{equation}

The sequences $C(n)$ and $D(n)$ are two of the so-called sporadic Ap{\'e}ry-like numbers found by Zagier \cite{Z}.  We will discuss the Ap{\'e}ry-like numbers and some properties of $C(n)$ and $D(n)$ in Section \ref{sec_Prelim_Apery}.

The results of this paper are as follows. First, using Hecke characters, we construct an infinite family of newforms which have complex multiplication by $\mathbb{Q}(\sqrt{-3})$ and whose $p$-th Fourier coefficients can be described by a single formula.  This is slightly more complicated than the $\mathbb{Q}(\sqrt{-1})$ case, as the ring of integers of $\mathbb{Q}(\sqrt{-3})$ admits half integers and its group of units is larger. Also, we need to consider a larger number of congruence classes for $k$, each of which requires its own construction.
\begin{theorem}\label{thm_CM}
Let $k \geq 2$ be an integer. Then there exists a weight $k$ newform with CM by $\mathbb{Q}(\sqrt{-3})$
$$
f_k(z) =: \sum_{n=1}^{\infty} \gamma_k(n) q^n
\in
\begin{cases}
S_k(\Gamma_0(36)), & \text{if $k \equiv 0,2 \imod 6$},\\
S_k(\Gamma_0(3),(\frac{-3}{\cdot})), & \text{if $k \equiv 1 \imod 6$},\\
S_k(\Gamma_0(12),(\frac{-3}{\cdot})), & \text{if $k \equiv 3,5 \imod 6$},\\
S_k(\Gamma_0(9)), & \text{if $k \equiv 4 \imod 6$}.
\end{cases}
$$
Furthermore, for any odd prime $p$,
$$
\gamma_k(p)
=
\begin{cases}
(a+ b\sqrt{-3})^{k-1} + (a-b \sqrt{-3})^{k-1} & \text{if $p \equiv 1 \imod 6$},\\
(-3)^\frac{k-1}{2} & \text{if $p =3$ and $k$ odd}\\
0 & \text{otherwise},
\end{cases}
$$
where $p=a^2+3b^2$ when $p \equiv 1 \pmod{6}$, with $a\equiv 1 \imod 3$ and $b>0$ integers.
Therefore, for all $n\geq 1, k \geq 2$, $\gamma_k(n)$ is an integer.
\end{theorem}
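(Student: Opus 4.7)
The plan is to construct, for each residue class of $k$ modulo $6$, a Hecke Gr\"ossencharacter $\psi_k$ of infinity type $(k-1,0)$ on $K = \mathbb{Q}(\sqrt{-3})$ with a prescribed conductor $\mathfrak{m}_k$, and then obtain $f_k$ as the theta lift $f_k(z) = \sum_{\mathfrak{a}} \psi_k(\mathfrak{a})\, q^{N\mathfrak{a}}$, summed over integral ideals coprime to $\mathfrak{m}_k$. The arithmetic of $K$ is favorable: $\mathcal{O}_K = \mathbb{Z}[\omega]$ has class number one, unit group $\mu_6 = \langle -\omega\rangle$, and $3 = -\omega^2(\sqrt{-3})^2$ is the only ramified rational prime.

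I would split into cases according to the order of the map $u \mapsto u^{k-1}$ on $\mu_6$ and choose $\mathfrak{m}_k$ as the smallest modulus refining the unit group accordingly: $\mathfrak{m}_k = (1)$ when $k \equiv 1 \pmod 6$; $\mathfrak{m}_k = (\sqrt{-3})$ when $k \equiv 4 \pmod 6$; $\mathfrak{m}_k = (2)$ when $k \equiv 3,5 \pmod 6$; and $\mathfrak{m}_k = (2\sqrt{-3})$ when $k \equiv 0,2 \pmod 6$. A short check using $\sqrt{-3} = 1+2\omega$ shows that the natural map $\mu_6 \to (\mathcal{O}_K/\mathfrak{m}_k)^*$ detects exactly those units on which $u^{k-1}$ is non-trivial (and is an isomorphism when $\mathfrak{m}_k = (2\sqrt{-3})$). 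Hence $u \mapsto u^{-(k-1)}$ extends to a well-defined character $\epsilon_k$ on $(\mathcal{O}_K/\mathfrak{m}_k)^*$, and setting $\psi_k((\alpha)) := \alpha^{k-1}\epsilon_k(\alpha)$ for $\alpha$ coprime to $\mathfrak{m}_k$ produces a Hecke character with conductor exactly $\mathfrak{m}_k$. The classical Hecke--Shimura theorem then ensures $f_k$ is a newform in $S_k(\Gamma_0(3 N(\mathfrak{m}_k)), \chi_K\chi'_k)$, where $\chi_K = (\tfrac{-3}{\cdot})$ and $\chi'_k$ is the Dirichlet character induced by $\psi_k|_{\mathbb{Z}}$. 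A direct computation of $\epsilon_k$ on rational integers gives $\chi'_k = 1$ for $k$ odd and $\chi'_k = \chi_K$ for $k$ even, which after using $\chi_K^2 = 1$ reproduces the four stated levels and nebentypes.

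For the coefficient formula, the inert primes $p \equiv 5 \pmod 6$ yield $\gamma_k(p) = 0$ since no integral ideal has norm $p$. For $p = 3$ with $k$ even, the ramified prime $(\sqrt{-3})$ divides $\mathfrak{m}_k$, so $\psi_k$ vanishes there; for $p = 3$ with $k$ odd, using $\sqrt{-3} \equiv 1 \pmod 2$ one computes $\psi_k((\sqrt{-3})) = (\sqrt{-3})^{k-1} = (-3)^{(k-1)/2}$. Finally, for $p \equiv 1 \pmod 6$, write $p = a^2+3b^2$ with $a \equiv 1 \pmod 3$, $b>0$; then $(p) = (a+b\sqrt{-3})(a-b\sqrt{-3})$, and the conditions $a \equiv 1 \pmod 3$ together with the forced parity that $a+b$ is odd (from $p$ odd) give $a \pm b\sqrt{-3} \equiv 1 \pmod{\mathfrak{m}_k}$ in each of the four cases, so $\epsilon_k$ evaluates to $1$ on both generators and $\gamma_k(p) = (a+b\sqrt{-3})^{k-1} + (a-b\sqrt{-3})^{k-1}$. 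Integrality of every $\gamma_k(n)$ then follows from multiplicativity and the Euler-factor recursion, since all prime values are $0$, a rational power of $-3$, or a trace from $K$. The main obstacle is case-by-case bookkeeping: verifying that $\mathfrak{m}_k$ is the true conductor rather than a proper multiple, and that the level and nebentypus come out correctly in all six residue classes.
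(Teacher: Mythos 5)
Your proposal is correct and follows essentially the same route as the paper: the conductors you choose agree with the paper's (note $(\sqrt{-3})=(1+\omega)$ and $(2\sqrt{-3})=(2+2\omega)$), and your abstract unit-character $\epsilon_k$ on $(\mathcal{O}_K/\mathfrak{m}_k)^*$ is exactly what the paper realizes explicitly via its correction factors $\chi$ and $\psi$. The coefficient computations (including the key observation that $a\pm b\sqrt{-3}\equiv 1$ modulo each $\mathfrak{m}_k$ under the normalization $a\equiv 1\pmod 3$, $b>0$) and the integrality argument via multiplicativity likewise match the paper's proof.
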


We then get some nice relations between the Fourier coefficients of forms of different weights. 
\begin{cor}\label{cor_Coeffs}
Let $k \geq 2$ be an integer and let
$f_k(z) =: \sum_{n=1}^{\infty} \gamma_k(n) q^n$
be the weight $k$ CM newform described in Theorem~\ref{thm_CM}. 
Then, for any prime $p>3$ and integer $r \geq 1$,
\begin{align*}
\gamma_k(p)^r &= \sum_{t=0}^{\lfloor \frac{r-1}{2} \rfloor} \binom{r}{t} p^{t(k-1)} \gamma_{(r-2t)(k-1)+1}(p)
+ 
\begin{cases}
\displaystyle \binom{r}{\frac{r}{2}} p^{\frac{r}{2}(k-1)}, & \text{if $p \equiv 1 \imod 6$ and $r$ even},\\
0, & \text{otherwise}.
\end{cases}
\end{align*}
Also, for $k$ odd,
$$\gamma_k(3)^r = \gamma_{r(k-1)+1}(3).$$
\end{cor}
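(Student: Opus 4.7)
The plan is to compute $\gamma_k(p)^r$ directly from the explicit formula supplied by Theorem~\ref{thm_CM}, handling the three cases $p \equiv 1 \imod 6$, $p \equiv 5 \imod 6$, and $p = 3$ separately.

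For the main case $p \equiv 1 \imod 6$, write $p = a^2 + 3b^2$ as in the theorem and set $\pi := a + b\sqrt{-3}$, $\bar\pi := a - b\sqrt{-3}$, so that $\pi \bar\pi = p$ and $\gamma_k(p) = \pi^{k-1} + \bar\pi^{k-1}$. Expanding by the binomial theorem gives
\[
\gamma_k(p)^r = \sum_{t=0}^{r} \binom{r}{t}\, \pi^{t(k-1)} \bar\pi^{(r-t)(k-1)}.
\]
The key step is to pair the $t$-th and $(r-t)$-th summands. Their sum equals
\[
\binom{r}{t} (\pi\bar\pi)^{t(k-1)} \bigl[\pi^{(r-2t)(k-1)} + \bar\pi^{(r-2t)(k-1)}\bigr] = \binom{r}{t} p^{t(k-1)} \gamma_{(r-2t)(k-1)+1}(p),
\]
again by Theorem~\ref{thm_CM}, since $(r-2t)(k-1)+1 \geq 2$ whenever $t < r/2$. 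Summing over $0 \leq t < r/2$ and accounting for the unpaired middle term $\binom{r}{r/2} p^{(r/2)(k-1)}$ that appears only when $r$ is even yields the claimed identity.

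For $p \equiv 5 \imod 6$ there is nothing to pair: Theorem~\ref{thm_CM} gives $\gamma_w(p) = 0$ at \emph{every} weight $w \geq 2$, so both sides of the identity vanish (the correction term is absent by definition). Finally, for $p = 3$ with $k$ odd, Theorem~\ref{thm_CM} gives $\gamma_k(3) = (-3)^{(k-1)/2}$, hence $\gamma_k(3)^r = (-3)^{r(k-1)/2}$. Since $k-1$ is even, $r(k-1)+1$ is odd, and $\gamma_{r(k-1)+1}(3) = (-3)^{r(k-1)/2}$ by the same formula.

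There is no substantive obstacle here: the result is a direct algebraic consequence of the closed form in Theorem~\ref{thm_CM} together with the norm relation $\pi\bar\pi = p$. The only thing to take some care with is the bookkeeping around the parity of $r$ (whether the middle term of the binomial expansion is present) and verifying that the indexing $(r-2t)(k-1)+1$ falls within the range $k \geq 2$ where Theorem~\ref{thm_CM} is stated, which is automatic for $t \leq \lfloor (r-1)/2 \rfloor$.
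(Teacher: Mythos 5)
Your proposal is correct and follows essentially the same route as the paper: binomial expansion of $\left(\pi^{k-1}+\bar\pi^{k-1}\right)^r$, pairing the $t$-th and $(r-t)$-th terms via the norm relation $\pi\bar\pi=p$, with the degenerate cases $p\equiv 5 \pmod 6$ and $p=3$ handled directly from the explicit formula. No issues to report.
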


\begin{cor}\label{cor_CongCoeffs}
Let $k \geq 2$ be an integer and let
$f_k(z) =: \sum_{n=1}^{\infty} \gamma_k(n) q^n$
be the weight $k$ CM newform described in Theorem~\ref{thm_CM}. 
Then, for any prime $p>3$ and integer $r \geq 1$,
\begin{equation*}
\gamma_k(p)^r \equiv \gamma_{r(k-1)+1}(p) \pmod{p^{k-1}}.
\end{equation*}
\end{cor}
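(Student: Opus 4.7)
The plan is to deduce this congruence as an almost immediate consequence of Corollary \ref{cor_Coeffs}, by observing that every term on the right-hand side of the exact identity there, except one, carries a factor of $p^{k-1}$.

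Concretely, I would start with the identity
\[
\gamma_k(p)^r = \sum_{t=0}^{\lfloor \frac{r-1}{2} \rfloor} \binom{r}{t} p^{t(k-1)}\, \gamma_{(r-2t)(k-1)+1}(p) + E,
\]
where $E = \binom{r}{r/2} p^{(r/2)(k-1)}$ if $p \equiv 1 \imod 6$ and $r$ is even, and $E = 0$ otherwise. I would then split the sum into the $t=0$ contribution and the contributions with $t\ge 1$. The $t=0$ term is exactly $\gamma_{r(k-1)+1}(p)$, which is the quantity appearing on the right of the desired congruence. For $t \ge 1$ the factor $p^{t(k-1)}$ is divisible by $p^{k-1}$, and since $\gamma_{(r-2t)(k-1)+1}(p)$ is an integer (by the last assertion of Theorem \ref{thm_CM}, noting $(r-2t)(k-1)+1 \ge 2$), each such term vanishes modulo $p^{k-1}$.

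For the correction term $E$, the case needed is $r$ even with $r \ge 2$, so $r/2 \ge 1$ and $p^{(r/2)(k-1)}$ is again divisible by $p^{k-1}$. Hence $E \equiv 0 \pmod{p^{k-1}}$ as well. Combining these observations yields
\[
\gamma_k(p)^r \equiv \gamma_{r(k-1)+1}(p) \pmod{p^{k-1}},
\]
as required.

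I do not anticipate any real obstacle here: the content of the corollary is purely bookkeeping on top of Corollary \ref{cor_Coeffs}. The only mild point to keep in mind is that one should verify that the corollary is vacuous/consistent in the boundary cases $k=2$ (where $p^{k-1}=p$ and the claim is just $\gamma_2(p)^r \equiv \gamma_{r+1}(p) \pmod p$) and $r=1$ (where the sum has only the $t=0$ term and the congruence is an equality). Both are immediate from the argument above.
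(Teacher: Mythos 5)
Your proposal is correct and is exactly the paper's argument: the paper's proof of this corollary is the single line that it ``follows from reducing the result in Corollary \ref{cor_Coeffs} modulo $p^{k-1}$,'' and your write-up simply spells out the bookkeeping (the $t=0$ term survives, the $t\ge 1$ terms and the correction term all carry a factor $p^{t(k-1)}$ or $p^{(r/2)(k-1)}$ divisible by $p^{k-1}$). No issues.
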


Results in \cite{SB} provide us with a congruence relation between values of $C(n)$ and the Fourier coefficients of the weight three form, $f_3(z)$, from Theorem \ref{thm_CM}. Combining this with Corollary \ref{cor_CongCoeffs} we  relate values of $C(n)$ to all $f_k(z)$ for $k$ odd.

\begin{theorem}\label{thm_CongCn}
Let $k \geq 2$ be an integer and let
$f_k(z) =: \sum_{n=1}^{\infty} \gamma_k(n) q^n$
be the weight $k$ CM newform described in Theorem~\ref{thm_CM}. 
Let $C(n)$ be the sequence defined in (\ref{def_Cn}).
Then, for any prime $p>3$ and integer $r \geq 1$,
\begin{equation*}
C\left(\frac{p^r-1}{2}\right)
\equiv \gamma_{2r+1}(p) \pmod{p}.
\end{equation*}
\end{theorem}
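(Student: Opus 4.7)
The plan is to split the statement into two chained congruences connected by $\gamma_3(p)^r$: first show that $C\bigl(\tfrac{p^r-1}{2}\bigr) \equiv \gamma_3(p)^r \pmod{p}$, and then invoke Corollary~\ref{cor_CongCoeffs} with $k=3$ to transport the right-hand side up to weight $2r+1$. For the second link this is immediate: Corollary~\ref{cor_CongCoeffs} gives
\[
\gamma_3(p)^r \equiv \gamma_{r\cdot 2 + 1}(p) = \gamma_{2r+1}(p) \pmod{p^{2}},
\]
which in particular holds modulo $p$, so combining the two gives the desired congruence.

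The first link is where the work lies, and this is where \cite{SB} enters. Stienstra and Beukers, using the formal group attached to the family of K3-surfaces whose Picard--Fuchs equation produces the sequence $C(n)$, establish that for any prime $p>3$,
\[
C\Bigl(\tfrac{p-1}{2}\Bigr) \equiv \gamma_3(p) \pmod{p},
\]
together with an Atkin--Swinnerton-Dyer style recurrence
\[
C\Bigl(\tfrac{p^r-1}{2}\Bigr) - \gamma_3(p)\, C\Bigl(\tfrac{p^{r-1}-1}{2}\Bigr) + p^{2}\,\chi(p)\, C\Bigl(\tfrac{p^{r-2}-1}{2}\Bigr) \equiv 0 \pmod{p^{r}}
\]
(with $\chi$ the nebentypus of $f_3$). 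Reducing this recurrence modulo $p$ kills the $p^2$ term, leaving
\[
C\Bigl(\tfrac{p^r-1}{2}\Bigr) \equiv \gamma_3(p)\, C\Bigl(\tfrac{p^{r-1}-1}{2}\Bigr) \pmod{p}.
\]
Iterating this on $r$, with base case $C(0)=1$, yields $C\bigl(\tfrac{p^r-1}{2}\bigr) \equiv \gamma_3(p)^{r} \pmod{p}$, which completes the chain.

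The main obstacle is pinning down the precise form of the result needed from \cite{SB}: one must verify that the sequence $C(n)$ is one of those covered by their formal-group machinery and that the relevant normalizing constants match $\gamma_3(p)$ as defined here. Once that identification is made, the argument is essentially assembly — an $r$-fold iteration of a mod-$p$ linear recurrence followed by a single application of Corollary~\ref{cor_CongCoeffs}. Special attention should be paid to the inert case $p \equiv 5 \pmod 6$, where $\gamma_3(p) = 0$ and the claim reduces to $C\bigl(\tfrac{p^r-1}{2}\bigr)\equiv 0 \pmod p$ for all $r\geq 1$; this is consistent with both $\gamma_{2r+1}(p)=0$ on the modular side and with the iterated recurrence on the combinatorial side.
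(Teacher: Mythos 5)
Your proposal follows essentially the same route as the paper: both rest on the Stienstra--Beukers congruence (\ref{for_U_gamma}), reduce it modulo $p$ to obtain $C\bigl(\tfrac{p^r-1}{2}\bigr)\equiv \gamma_3(p)\,C\bigl(\tfrac{p^{r-1}-1}{2}\bigr)\pmod{p}$, iterate, and finish with Corollary~\ref{cor_CongCoeffs} at $k=3$. The one step you flag but do not carry out --- matching the Stienstra--Beukers constant to $\gamma_3(p)$ --- is handled in the paper by the short computation $\gamma_3(p)=(a+b\sqrt{-3})^2+(a-b\sqrt{-3})^2=2a^2-6b^2=4a^2-2p$, so the identification is immediate from Theorem~\ref{thm_CM}.
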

\noindent We note that the congruence relation in Theorem \ref{thm_CongCn} does not hold modulo $p^2$.

Next we construct an infinite family of newforms which have complex multiplication by $\mathbb{Q}(\sqrt{-2})$ and establish similar results with respect to the $D(n)$ sequence.
\begin{theorem}\label{thm_CM2}
Let $k \geq 3$ be an \textbf{odd} integer. Then there exists a weight $k$ newform with CM by $\mathbb{Q}(\sqrt{-2})$,
$$
g_k(z) =: \sum_{n=1}^{\infty} \beta_k(n) q^n
\in
S_k(\Gamma_0(8),(\tfrac{-8}{\cdot}))
$$
such that for any prime $p$,
$$
\beta_k(p)
=
\begin{cases}
(c+d \sqrt{-2})^{k-1} + (c-d \sqrt{-2})^{k-1} & \text{if $p \equiv 1,3 \imod 8$},\\
(-2)^\frac{k-1}{2} & \text{if $p =2$}\\
0 & \text{otherwise},
\end{cases}
$$
where $p=c^2+2d^2$ when $p \equiv 1,3 \pmod{8}$, with $c$ and $d$ integers.
Therefore, for all $n\geq 1, k \geq 3$, $\beta_k(n)$ is an integer.
\end{theorem}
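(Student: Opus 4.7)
The plan is to mirror the Hecke character construction used to establish Theorem~\ref{thm_CM}, now specialized to the field $K = \mathbb{Q}(\sqrt{-2})$. Two features of $K$ substantially simplify the setup relative to the $\mathbb{Q}(\sqrt{-3})$ case: its class number is $1$, so every ideal of $\mathcal{O}_K = \mathbb{Z}[\sqrt{-2}]$ is principal; and its unit group $\mathcal{O}_K^{\times} = \{\pm 1\}$ has only two elements. Since $k$ is assumed odd, $k-1$ is even and the sign ambiguity in choosing a generator of a principal ideal vanishes. This is precisely why the theorem is restricted to odd $k$, and why, unlike the $\mathbb{Q}(\sqrt{-3})$ situation, there is no need to split into congruence classes of $k$ modulo anything.

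First I would define $\psi_k\bigl((\alpha)\bigr) := \alpha^{k-1}$. Because $(-\alpha)^{k-1} = \alpha^{k-1}$, this is a well-defined Hecke Gr\"ossencharacter of infinity type $k-1$ with trivial finite conductor on $\mathcal{O}_K$. The associated theta series
$$
g_k(z) := \sum_{\mathfrak{a}} \psi_k(\mathfrak{a}) \, q^{N(\mathfrak{a})},
$$
with $\mathfrak{a}$ running over nonzero integral ideals of $\mathcal{O}_K$, is by Hecke's classical theorem (applied exactly as in \cite{MOS} and in Theorem~\ref{thm_CM}) a CM newform of weight $k$. Its level equals $|d_K|\cdot N(\mathfrak{f})$, where $\mathfrak{f}$ is the finite conductor of $\psi_k$, giving level $|d_K| = 8$, and its nebentypus is the Kronecker symbol $(\tfrac{-8}{\cdot})$ attached to $K/\mathbb{Q}$. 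Thus $g_k \in S_k(\Gamma_0(8),(\tfrac{-8}{\cdot}))$, as required.

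To extract the formula for $\beta_k(p)$ I would case-split according to the splitting of $p$ in $\mathcal{O}_K$. For $p=2$, the ideal $(2) = (\sqrt{-2})^2$ ramifies, and the single prime of norm $2$ contributes $\beta_k(2) = (\sqrt{-2})^{k-1} = (-2)^{(k-1)/2}$. For $p\equiv 1,3 \imod 8$, $p$ splits as $p = \pi\bar\pi$ with $\pi = c + d\sqrt{-2}$ and $p = c^2 + 2d^2$, so the two ideals of norm $p$ together contribute $\beta_k(p) = \pi^{k-1} + \bar\pi^{k-1}$. For $p \equiv 5,7 \imod 8$, $p$ is inert, no integral ideal has norm $p$, and $\beta_k(p) = 0$. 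Integrality of all $\beta_k(n)$ then follows from the manifest integrality of these symmetric sums and the multiplicativity that $\psi_k$ imposes on the Fourier coefficients.

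The main obstacle is really the bookkeeping in the second step: verifying that the level produced by the Hecke construction is exactly $8$ (not a proper multiple), and that the nebentypus is precisely $(\tfrac{-8}{\cdot})$. Both follow from the standard theory of CM theta series once one checks that $\psi_k$ is primitive with trivial finite conductor, so that the level collapses to $|d_K| = 8$ and the nebentypus reduces to the quadratic character of $K/\mathbb{Q}$. Apart from this verification, the argument parallels the proof of Theorem~\ref{thm_CM}, with the simpler modular data directly reflecting the smaller unit group $\{\pm 1\}$ of $\mathcal{O}_K$.
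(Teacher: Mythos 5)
Your proposal is correct and follows essentially the same route as the paper: a Hecke character of trivial conductor $(1)$ on $\mathcal{O}_K=\mathbb{Z}[\sqrt{-2}]$ given by $\alpha\mapsto\alpha^{k-1}$ (well defined since $k-1$ is even and the units are $\pm1$), yielding a newform of level $|D|\cdot N((1))=8$ with nebentypus $(\tfrac{-8}{\cdot})$, with the coefficient formula read off from the splitting of $p$ and integrality from the even-power terms of the symmetric sum. Your explicit remark that the restriction to odd $k$ is exactly what makes the character well defined is a point the paper leaves implicit.
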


\begin{cor}\label{cor_Coeffs2}
Let $k \geq 3$ be an odd integer and let
$g_k(z) =: \sum_{n=1}^{\infty} \beta_k(n) q^n$
be the weight $k$ CM newform described in Theorem~\ref{thm_CM2}. 
Then, for any prime $p>2$ and integer $r \geq 1$,
\begin{align*}
\beta_k(p)^r &= \sum_{t=0}^{\lfloor \frac{r-1}{2} \rfloor} \binom{r}{t} p^{t(k-1)} \beta_{(r-2t)(k-1)+1}(p)
+ 
\begin{cases}
\displaystyle \binom{r}{\frac{r}{2}} p^{\frac{r}{2}(k-1)}, & \text{if $p \equiv 1,3  \imod 8$ and $r$ even},\\
0, & \text{otherwise}.
\end{cases}
\end{align*}
Also,
$$\beta_k(2)^r = \beta_{r(k-1)+1}(2).$$
\end{cor}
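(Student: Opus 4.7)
The proof is a direct binomial expansion mirroring the argument used for Corollary \ref{cor_Coeffs}, with the role of the algebraic integer $a + b\sqrt{-3}$ played instead by $\pi := c + d\sqrt{-2} \in \mathbb{Z}[\sqrt{-2}]$. The plan is to treat the three prime cases of Theorem \ref{thm_CM2} separately.

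First, suppose $p \equiv 1, 3 \pmod 8$, so that $p = c^2 + 2d^2 = \pi \bar{\pi}$ and $\beta_k(p) = \pi^{k-1} + \bar{\pi}^{k-1}$. Applying the binomial theorem gives
\begin{equation*}
\beta_k(p)^r = \sum_{t=0}^{r} \binom{r}{t} \pi^{(k-1)(r-t)} \bar{\pi}^{(k-1)t}.
\end{equation*}
For each $t$ with $0 \leq t \leq \lfloor (r-1)/2 \rfloor$, I would pair the index $t$ with $r-t$ and use $\pi \bar{\pi} = p$ to factor out $p^{t(k-1)}$, producing
\begin{equation*}
\binom{r}{t} p^{t(k-1)} \left[ \pi^{(r-2t)(k-1)} + \bar{\pi}^{(r-2t)(k-1)} \right].
\end{equation*}
Since $k$ is odd, $k-1$ is even, so $(r-2t)(k-1) + 1$ is an odd integer $\geq 3$; hence Theorem \ref{thm_CM2} applies and the bracketed expression equals $\beta_{(r-2t)(k-1)+1}(p)$. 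When $r$ is even, the unpaired middle index $t = r/2$ contributes exactly $\binom{r}{r/2} p^{(r/2)(k-1)}$, accounting for the correction term.

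Next, for $p \equiv 5, 7 \pmod 8$, Theorem \ref{thm_CM2} gives $\beta_k(p) = 0$ for every odd $k \geq 3$, so both $\beta_k(p)^r$ and every $\beta_{(r-2t)(k-1)+1}(p)$ on the right-hand side vanish, and the correction term is by convention absent; the identity is $0 = 0$. The second displayed identity, for $p = 2$, follows immediately from $\beta_k(2) = (-2)^{(k-1)/2}$: raising to the $r$-th power yields $(-2)^{r(k-1)/2}$, which matches $\beta_{r(k-1)+1}(2) = (-2)^{(r(k-1))/2}$.

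There is no genuine obstacle here; the only small point requiring care is verifying that the weights $(r-2t)(k-1)+1$ appearing in the sum are always odd integers at least $3$, so that the forms $g_{(r-2t)(k-1)+1}$ in Theorem \ref{thm_CM2} are actually defined. This is ensured by $k$ odd and $t \leq \lfloor (r-1)/2 \rfloor$, which forces $r - 2t \geq 1$.
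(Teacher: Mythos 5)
Your proposal is correct and is essentially the paper's own argument: the paper simply says to repeat the binomial-expansion proof of Corollary \ref{cor_Coeffs} with $\beta^{\pm}(p) = c \pm d\sqrt{-2}$ in place of $\gamma^{\pm}(p)$, which is exactly what you carry out. Your added check that the weights $(r-2t)(k-1)+1$ are odd and at least $3$ (so that Theorem \ref{thm_CM2} actually supplies the forms appearing on the right-hand side) is a worthwhile detail the paper leaves implicit.
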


\begin{cor}\label{cor_CongCoeffs2}
Let $k \geq 3$ be an odd integer and let
$g_k(z) =: \sum_{n=1}^{\infty} \beta_k(n) q^n$
be the weight $k$ CM newform described in Theorem~\ref{thm_CM2}. 
Then, for any prime $p>2$ and integer $r \geq 1$,
\begin{equation*}
\beta_k(p)^r \equiv \beta_{r(k-1)+1}(p) \pmod{p^{k-1}}.
\end{equation*}
\end{cor}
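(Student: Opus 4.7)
The plan is to deduce this corollary as an immediate consequence of the exact identity in Corollary \ref{cor_Coeffs2}, by reducing that identity modulo $p^{k-1}$. First I would write it in the form
\[
\beta_k(p)^r = \sum_{t=0}^{\lfloor (r-1)/2 \rfloor} \binom{r}{t} p^{t(k-1)} \beta_{(r-2t)(k-1)+1}(p) + \varepsilon_{r,p},
\]
where $\varepsilon_{r,p}$ denotes the supplementary term appearing in the second line of Corollary \ref{cor_Coeffs2}. Note that the weights $(r-2t)(k-1)+1$ and $r(k-1)+1$ are all odd and at least $3$ (since $k\geq 3$ is odd and $r-2t\geq 1$ for $t\leq \lfloor (r-1)/2\rfloor$), so Theorem \ref{thm_CM2} applies to each of the forms involved.

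Next I would observe that every $t\geq 1$ contributes a factor $p^{t(k-1)}$ which vanishes modulo $p^{k-1}$, so the only surviving sum term is the one with $t=0$, contributing exactly $\beta_{r(k-1)+1}(p)$. The correction $\varepsilon_{r,p}$, when it is nonzero, equals $\binom{r}{r/2} p^{(r/2)(k-1)}$ with $r/2 \geq 1$, and therefore also vanishes modulo $p^{k-1}$. This gives the desired congruence in the case $p\equiv 1,3\pmod 8$.

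For the remaining primes $p>2$ with $p\not\equiv 1,3\pmod 8$, Theorem \ref{thm_CM2} gives $\beta_k(p)=0$ directly; since the weight $r(k-1)+1$ is also odd and at least $3$, the same theorem yields $\beta_{r(k-1)+1}(p)=0$, and the congruence becomes the trivial identity $0\equiv 0$. I do not anticipate any genuine obstacle in this argument: the substantive content has already been invested in the exact identity of Corollary \ref{cor_Coeffs2}, and the present corollary is essentially its truncation modulo $p^{k-1}$.
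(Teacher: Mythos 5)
Your proposal is correct and follows exactly the paper's route: the paper's proof of this corollary is precisely to reduce the identity of Corollary \ref{cor_Coeffs2} modulo $p^{k-1}$, which kills every term with $t\geq 1$ as well as the supplementary term, leaving only the $t=0$ contribution. Your additional remarks on the case $p\not\equiv 1,3\pmod 8$ are fine but not needed, since Corollary \ref{cor_Coeffs2} already holds for all primes $p>2$.
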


\begin{theorem}\label{thm_CongCn2}
Let $k \geq 3$ be an odd integer and let
$g_k(z) =: \sum_{n=1}^{\infty} \beta_k(n) q^n$
be the weight $k$ CM newform described in Theorem~\ref{thm_CM2}. 
Let $D(n)$ be the sequence defined in (\ref{def_Dn}).
Then, for any prime $p>2$ and integer $r \geq 1$,
\begin{equation*}
(-1)^{\frac{p^r-1}{2}} D\left(\frac{p^r-1}{2}\right) 
\equiv\beta_{2r+1}(p) \pmod{p}.
\end{equation*}
\end{theorem}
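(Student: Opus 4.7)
The plan is to imitate, line-for-line, the proof of Theorem~\ref{thm_CongCn}. There are three ingredients: (i) a base case at $r=1$ relating $D\bigl(\tfrac{p-1}{2}\bigr)$ to $\beta_3(p)$ modulo $p$; (ii) a Lucas-type multiplicative congruence for $D$ to promote the base case from exponent $\tfrac{p-1}{2}$ to $\tfrac{p^r-1}{2}$; and (iii) Corollary~\ref{cor_CongCoeffs2} with $k=3$ to pass from $\beta_3(p)^r$ to $\beta_{2r+1}(p)$.

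For (i), I would appeal to the Stienstra--Beukers supercongruences in \cite{SB}, which, for the sporadic sequence $D(n)=\sum_k\binom{n}{k}^3$, produce a weight-three CM newform $\tilde g\in S_3(\Gamma_0(8),(\tfrac{-8}{\cdot}))$ with $(-1)^{\frac{p-1}{2}}D\bigl(\tfrac{p-1}{2}\bigr)\equiv \tilde a_p\pmod p$ for all odd primes $p$. Because $\dim S_3(\Gamma_0(8),(\tfrac{-8}{\cdot}))$ is small, the form $\tilde g$ must coincide with $g_3$ from Theorem~\ref{thm_CM2}; this can be certified by matching a few Fourier coefficients produced by the explicit formula of Theorem~\ref{thm_CM2}. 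This yields the base case
\[
(-1)^{\frac{p-1}{2}}D\!\left(\tfrac{p-1}{2}\right)\equiv \beta_3(p)\pmod p.
\]

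For (ii), the base-$p$ digits of $\tfrac{p^r-1}{2}$ are all equal to $\tfrac{p-1}{2}<p$. Applying Lucas' theorem term-by-term to the summand $\binom{n}{k}^3$ gives the product rule $D(ap+b)\equiv D(a)D(b)\pmod p$ for $0\le b<p$, and iterating yields $D\bigl(\tfrac{p^r-1}{2}\bigr)\equiv D\bigl(\tfrac{p-1}{2}\bigr)^r\pmod p$. A short parity check shows $\tfrac{p^r-1}{2}-r\cdot\tfrac{p-1}{2}=\tfrac{p-1}{2}\sum_{i=0}^{r-1}(p^i-1)$ is even, so the signs line up as $(-1)^{\frac{p^r-1}{2}}=\bigl[(-1)^{\frac{p-1}{2}}\bigr]^r$. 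Combining with the base case and Corollary~\ref{cor_CongCoeffs2} at $k=3$, which gives $\beta_3(p)^r\equiv\beta_{2r+1}(p)\pmod{p^2}$, we chain
\[
(-1)^{\frac{p^r-1}{2}}D\!\left(\tfrac{p^r-1}{2}\right)\equiv\bigl[(-1)^{\frac{p-1}{2}}D\!\left(\tfrac{p-1}{2}\right)\bigr]^r\equiv\beta_3(p)^r\equiv\beta_{2r+1}(p)\pmod p.
\]

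The main obstacle is ingredient (i): one must locate the precise statement in \cite{SB} (with its sign convention), identify the weight-three CM form appearing there with $g_3$ from Theorem~\ref{thm_CM2}, and verify that the sign $(-1)^{(p-1)/2}$ is correct. Everything else is routine manipulation given the tools already assembled in this paper.
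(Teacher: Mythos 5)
Your proof is correct, and its overall skeleton (base case at $r=1$, multiplicativity in $r$, then Corollary~\ref{cor_CongCoeffs2} at $k=3$) matches the paper's; but the engine you use for the multiplicative step is genuinely different. The paper gets $U_2(p^r)\equiv U_2(p)\,U_2(p^{r-1})\pmod p$ by feeding the $r=1$ instance of the Stienstra--Beukers three-term congruence (\ref{for_U_gamma}) back into itself and reducing modulo $p$ (the $p^2\,U_2(mp^{r-2})$ term dies), then inducting; you instead derive $D(ap+b)\equiv D(a)D(b)\pmod p$ from Lucas' theorem applied to $\binom{n}{k}^3$ and iterate over the base-$p$ digits of $\tfrac{p^r-1}{2}$, all of which equal $\tfrac{p-1}{2}$. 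Your Lucas argument is valid and more elementary/self-contained, whereas the paper's route reuses machinery it must quote from \cite{SB} anyway; the trade-off is that the recurrence route generalizes uniformly to the $M=3,4$ cases (Theorems~\ref{thm_CongCn} and~\ref{thm_CongCn3}, the latter needing the congruence modulo $p^2$, which Lucas alone would not give). For your ingredient (i), the ``obstacle'' you flag is already resolved in the paper's preliminaries: taking $m=r=1$ in (\ref{for_U_gamma}) gives $U_2(p)\equiv 4c^2-2p\pmod p$ when $\bigl(\tfrac{-2}{p}\bigr)=1$ and $U_2(p)\equiv 0$ otherwise, and a one-line computation from Theorem~\ref{thm_CM2} shows $\beta_3(p)=(c+d\sqrt{-2})^2+(c-d\sqrt{-2})^2=4c^2-2p$; there is no need to identify a newform by dimension counting. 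Your parity bookkeeping $(-1)^{(p^r-1)/2}=\bigl[(-1)^{(p-1)/2}\bigr]^r$ agrees with the remark at the end of the paper's proof, and the final step via Corollary~\ref{cor_CongCoeffs2} is identical.
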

\noindent We note that the congruence relation in Theorem \ref{thm_CongCn2} does not hold modulo $p^2$.

Applying the methods used to prove Theorems \ref{thm_CongCn} and \ref{thm_CongCn2}, to the results in \cite{MOS}, gives a slight enhancement of (\ref{superapery1}).
\begin{theorem}\label{thm_CongCn3}
Let $k \geq 2$ be an integer and let
$h_k(z) =: \sum_{n=1}^{\infty} \alpha_k(n) q^n$
be the weight $k$ CM newform described in Section \ref{sec_Intro}. 
Let $A(n)$ be the sequence defined in (\ref{AperyNumbers}).
Then, for any prime $p\geq 5$ and integer $r \geq 1$,
\begin{equation}
 A\Bigl(\frac{p^r-1}{2}\Bigr) 
\equiv \gamma_{2r+1}(p) \pmod{p^2}.
\end{equation}
\end{theorem}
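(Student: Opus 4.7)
The plan is to adapt, \emph{mutatis mutandis}, the strategy used in the proofs of Theorems~\ref{thm_CongCn} and~\ref{thm_CongCn2}, now working modulo $p^{2}$ rather than $p$, with the role of the $\mathbb{Q}(\sqrt{-3})$ family played by the $\mathbb{Q}(\sqrt{-1})$ family $\{h_{k}\}$ constructed in \cite{MOS}, and with Ahlgren's congruence (\ref{superapery1}) supplying the base case at weight $3$. The target is the chain
$$A\!\left(\tfrac{p^{r}-1}{2}\right) \equiv A\!\left(\tfrac{p-1}{2}\right)^{r} \equiv \alpha_{3}(p)^{r} \equiv \alpha_{2r+1}(p) \pmod{p^{2}},$$
in which the last two congruences follow immediately from the \cite{MOS} machinery while the first is where the real work lies.

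Concretely, raising (\ref{superapery1}) to the $r$-th power yields $A((p-1)/2)^{r} \equiv \alpha_{3}(p)^{r} \pmod{p^{2}}$, and (\ref{for_alphap2}) yields $\alpha_{3}(p)^{r} \equiv \alpha_{2r+1}(p) \pmod{p^{2}}$. These two steps play exactly the roles that the \cite{SB} congruence $C((p-1)/2) \equiv \gamma_{3}(p) \pmod{p}$ and Corollary~\ref{cor_CongCoeffs} played in the proof of Theorem~\ref{thm_CongCn}. Everything therefore reduces to the single super-congruence
$$A\!\left(\tfrac{p^{r}-1}{2}\right) \equiv A\!\left(\tfrac{p-1}{2}\right)^{r} \pmod{p^{2}},$$
which I would attack by induction on $r$ using the identity $(p^{r}-1)/2 = \tfrac{p-1}{2}\cdot p^{r-1} + (p^{r-1}-1)/2$, so that the base-$p$ expansion of $(p^{r}-1)/2$ is the $r$-fold repetition of the digit $(p-1)/2$.

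The main obstacle is precisely this inductive step, which asks for a mod-$p^{2}$ Beukers-Stienstra-type multiplicative congruence
$$A(mp + n) \equiv A(m)\,A(n) \pmod{p^{2}} \qquad \text{for } 0 \leq n \leq \tfrac{p-1}{2},$$
or, equivalently, for a one-step version $A((p^{r}-1)/2) \equiv \alpha_{3}(p)\,A((p^{r-1}-1)/2) \pmod{p^{2}}$ that can be iterated. The mod-$p$ analogue is the familiar Lucas-type congruence for Ap\'ery-like numbers that underlies the proofs of Theorems~\ref{thm_CongCn} and~\ref{thm_CongCn2}; its $p^{2}$-strengthening requires the extra arithmetic structure specific to the Ap\'ery sequence $A(n)$ that was not available for the sporadic sequences $C$ and $D$, which is why Theorem~\ref{thm_CongCn3} delivers a $p^{2}$ congruence in place of the $p$ congruences of those theorems. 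I would handle this strengthening either by direct manipulation of the defining binomial sum combined with Wolstenholme-type and $p$-adic Gamma function identities, or by verifying that both sides of the target congruence satisfy the same Hecke-type recursion modulo $p^{2}$ with matching initial conditions $A(0) = 1$ and $A((p-1)/2) \equiv \alpha_{3}(p) \pmod{p^{2}}$.
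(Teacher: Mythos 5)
Your reduction of the theorem to the chain $A((p^{r}-1)/2) \equiv A((p-1)/2)^{r} \equiv \alpha_{3}(p)^{r} \equiv \alpha_{2r+1}(p) \pmod{p^{2}}$ is exactly the structure of the paper's argument, and you correctly dispatch the last two links via (\ref{superapery1}) and (\ref{for_alphap2}). The gap is that you leave the first link --- the mod $p^{2}$ multiplicativity --- unproven, offering only two candidate strategies, and your diagnosis of why it is hard is off the mark. No new supercongruence for $A(n)$, no Wolstenholme or $p$-adic Gamma input, and no ``extra arithmetic structure specific to $A(n)$'' is needed: the Stienstra--Beukers congruence (\ref{for_U_gamma}), already quoted in Section \ref{sec_Prelim_Apery}, holds modulo $p^{r}$, hence modulo $p^{2}$ for every $r \geq 2$. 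Taking $m=1$ there gives $U_{4}(p^{r}) \equiv (4a^{2}-2p)\,U_{4}(p^{r-1}) \pmod{p^{2}}$ when $(\tfrac{-4}{p})=1$ and $U_{4}(p^{r}) \equiv 0 \pmod{p^{2}}$ otherwise; since $4a^{2}-2p = \alpha_{3}(p)$ and Ahlgren's result gives $U_{4}(p) \equiv \alpha_{3}(p) \pmod{p^{2}}$, this is precisely the iterable one-step relation $U_{4}(p^{r}) \equiv U_{4}(p)\,U_{4}(p^{r-1}) \pmod{p^{2}}$, and induction finishes the job. This is your second suggested route (``both sides satisfy the same Hecke-type recursion with matching initial conditions''), except that the recursion is already in hand rather than something to be verified.

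Relatedly, your explanation of why Theorems \ref{thm_CongCn} and \ref{thm_CongCn2} only deliver mod $p$ conclusions is backwards: the recursion (\ref{for_U_gamma}) is equally strong (mod $p^{r}$) for $U_{2}$, $U_{3}$ and $U_{4}$; what differs is the weight-$3$ base case, which for $C(n)$ and $D(n)$ is only known modulo $p$ (it is itself extracted from (\ref{for_U_gamma}) with $r=1$), whereas for $A(n)$ Ahlgren supplies it modulo $p^{2}$. If you repair the middle step by invoking (\ref{for_U_gamma}) as above, your proof becomes essentially the paper's. One small bookkeeping point either way: the signs $(-1)^{(p^{r}-1)/2}$ relating $U_{4}$ to $A$ must be tracked, which is harmless since $(-1)^{(p^{r}-1)/2} = (-1)^{r(p-1)/2}$ and both sides vanish mod $p^{2}$ when $p \equiv 3 \pmod 4$.
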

 

\section{Preliminaries}\label{sec_Prelim}

\subsection{Modular forms with complex multiplication and Hecke characters}\label{sec_Prelim_CM}
In this section, we recall some properties of modular forms with complex multiplication and Hecke characters. For more details, see \cite{Ri}.

Suppose $\psi$ is a nontrivial real Dirichlet character with corresponding quadratic field $K$. A newform $f(z)= \sum_{n=1}^{\infty} \gamma(n) \, q^n$, where $q := e^{2 \pi i z}$, has complex multiplication (CM) by $\psi$, or by $K$, if $\gamma(p) = \psi(p) \, \gamma(p)$ for all primes $p$ in a set of density one. 

By the work of Hecke and Shimura we can construct CM newforms using Hecke characters. Let $K=\mathbb{Q}(\sqrt{-d})$ be an imaginary quadratic field with discriminant $D$, and let $\mathcal{O}_K$ be its ring of integers. For an ideal $\mathfrak{f} \in \mathcal{O}_K$, let $I(\mathfrak{f})$ denote the group of fractional ideals prime to $\mathfrak{f}$. A Hecke character of weight $k$ and modulo $\mathfrak{f}$ is a homomorphism
$\displaystyle
\Phi : I(\mathfrak{f}) \to \mathbb{C}^{*}$,
satisfying
$\displaystyle
\Phi(\alpha \, \mathcal{O}_K)= \alpha^{k-1} 
$
when
$
\alpha \equiv^{\times} 1 \pmod {\mathfrak{f}}.
$
Let $N(\mathfrak{a})$ denote the norm of the ideal $\mathfrak{a}$. Then,
\begin{align*}
  f(z) := \sum_{\mathfrak{a}} \Phi(\mathfrak{a}) q^{N(\mathfrak{a})} = \sum_{n=1}^{\infty} \gamma(n) q^n,
\end{align*}
where the sum is over all ideals $\mathfrak{a}$ in $\mathcal{O}_K$ prime to $\mathfrak{f}$, is a Hecke eigenform of weight $k$ on ${\Gamma_0(|D| \cdot N(\mathfrak{f}))}$ with Nebentypus 
$\chi(n)=\left(\frac{D}{n}\right) \frac{\Phi(n \, \mathcal{O}_K)}{n^{k-1}}$. Here, $\left( \tfrac{a}{n}\right)$ is the Kronecker symbol.
Furthermore, $f$ has CM by $K$. We call $\mathfrak{f}$ the conductor of $\Phi$ if $\mathfrak{f}$ is minimal, i.e., if $\Phi$ is defined modulo $\mathfrak{f}^{\prime}$ then $\mathfrak{f} \mid \mathfrak{f}^{\prime}$. If $\mathfrak{f}$ is the conductor of $\Phi$ then $f(z)$ is a newform. From \cite{Ri}, we also know that every CM newform comes from a Hecke character in this way.      


\subsection{The ideals of $\mathbb{Q}(\sqrt{-3})$}\label{sec_Prelim_Ideals}
Let $K=\mathbb{Q}(\sqrt{-3})$ and let $\omega = \frac{1+ \sqrt{-3}}{2}$. The ring of integers of $K$ is $\mathcal{O}_K = \mathbb{Z}[\omega]$, which is a principal ideal domain. Therefore, all fractional ideals of $K$ are also principal, and are of the form $\frac{1}{m} (\alpha)$ where $m \in \mathbb{Z} \setminus \{0\}$ and $\alpha \in \mathcal{O}_K$.

We will be interested in ideals of $\mathcal{O}_K$ of norm $p$, a prime. Let $\mathfrak{I} =  (x+y \omega)$ be an ideal of $\mathcal{O}_K$. Then $N(\mathfrak{I}) = N(x+y \omega) = x^2+xy+y^2$. If $p = x^2+ xy +y^2 = (x-y)^2 + 3xy$, then $p$ is a square modulo 3 and so $p=3$ or $p\equiv 1 \pmod 6$. It is well known \cite{C} that a prime $p$ can be written as $p=a^2+3b^2$, $a$ and $b$ integers, uniquely, up to the sign of $a$ and $b$, if and only if $p=3$ or $p\equiv 1 \pmod 6$. If we let $x= a-b$ and $y=2b$ then $x^2+xy+y^2 =p$. So there exists an ideal $\mathfrak{I}$ with $N(\mathfrak{I})=p$ if and only if $p=3$ or $p\equiv 1 \pmod 6$.

Now we determine how many ideals of norm $p$ there are when $p=3$, and when $p\equiv 1 \pmod 6$. 
Let $S_1 = \{(a,b) \mid a+3b^2 =p \}$. We define an equivalence relation, $\sim_1$ on $S_1$ by
$$(a_1, b_1) \sim_1 (a_2,b_2) \Longleftrightarrow |a_1|=|a_2|, |b_1|=|b_2|.$$ Then the order of $S_1/\sim_1$ is one. 
Next we let $S_2$ be the set of all ideals of $\mathcal{O}_K$ which have norm $p$. We define an equivalence relation, $\sim_2$ on $S_2$ by
$$(x_1 + y_1 \omega) \sim_2 (x_2 +y_2 \omega) \Longleftrightarrow (x_2 +y_2 \omega) \in \{(x_1 +y_1\omega),(y_1 +x_1 \omega)\}.$$ Note that each equivalence class in $S_2/\sim_2$ contains at most two elements.
We have a bijective map $\phi: S_1/\sim_1 \to S_2/\sim_2$ given by $ \phi([a,b]) = [a-b + 2b \omega]$, with inverse
$$\phi^{-1}([x+y\omega])
=\begin{cases}
[x+\frac{y}{2}, \frac{y}{2}] & \text{if $x$ odd, $y$ even},\\[3pt]
[y+\frac{x}{2}, \frac{x}{2}] & \text{if $x$ even, $y$ odd},\\[3pt]
[\frac{x-y}{2}, \frac{x+y}{2}] & \text{if $x$ odd, $y$ odd}.\\
\end{cases}
$$
Therefore $S_2/\sim_2$ contains only one equivalence class. The group of units of $\mathcal{O}_K$ is cyclic of order 6 generated by $\omega$. It is easy then to check that $(x+y\omega) = (y+x \omega)$ only if $p=3$, when $x=y$. Hence, when $p=3$ there is only one ideal of norm $p$ and when $p\equiv 1 \pmod 6$ there are two ideals of norm $p$.
Later, it will be convenient for us to choose the representatives of these ideal as
$-1+2 \omega = \sqrt{-3}$ in the case that $p=3$, and,
$(a-b)+2b \omega = a+b\sqrt{-3}$ and $(a+b)-2b\omega = a-b\sqrt{-3}$ in the case $p\equiv 1 \pmod 6$, where $p=a^2+3b^2$ with $a\equiv 1 \pmod 3$ and $b>0$.


\subsection{The Ap{\'e}ry-like numbers}\label{sec_Prelim_Apery}

The $A(n)$ Ap{\'e}ry numbers defined in (\ref{AperyNumbers}) satisfy the three term recurrence relation
\begin{equation}\label{for_recurrence}
b\, (n+1)^2 \, u(n+1)+(an^2+an-\lambda) \, u(n)+n^2 \, u(n-1)=0,
\end{equation}
when $(a,b,\lambda)=(11,-1,-3)$ and $u(0)=1$.
Following Beukers \cite{Be2}, Zagier \cite{Z} conducted a search involving over 100 million suitable triples $(a,b,\lambda)$ and found 36 triples which yielded an integral solution to (\ref{for_recurrence}) with $u(0)=1$. Six of these solutions are classed as sporadic and all six have binomial sum representations. The sequences $C(n)$ and $D(n)$, defined in (\ref{def_Cn}) and (\ref{def_Dn}), are two of these sporadic cases. The $B(n)$ Ap{\'e}ry numbers also satisfy a three term recurrence relation and generalized searches similar to Zagier's have been conducted for this relation also \cite{AZ, Co}. In general, integer sequences which are  solutions to either of these recurrences are known as Ap{\'e}ry-like numbers. Congruence properties of many of these sequences have been studied by various authors \cite{CCS, D, MS, OS, OS2, OSS, SB, V}.

Of particular interest to us are the results in \cite{SB}. For $M\in \{2,3,4\}$, let $U_M(n)$ be the sequences defined by $U_2(n)= (-1)^{\frac{n-1}{2}} D\left(\frac{n-1}{2}\right)$, $U_3(n)= C\left(\frac{n-1}{2}\right)$, and $U_4(n)= (-1)^{\frac{n-1}{2}} A\left(\frac{n-1}{2}\right)$ when $n$ is odd, and $U_M(n)=0$ when $n$ is even. Then for primes $p$ not dividing $M$ and all positive integers $m$ and $r$, we have
\begin{equation}\label{for_U_gamma}
U_M(mp^r) \equiv
\begin{cases}
(4a^2-2p) \, U_M(mp^{r-1}) - p^2 \, U_M(mp^{r-2})  \pmod{p^r} & \textup{ if }  \left(\frac{-M}{p}\right) = 1,\\
 p^2 \, U_M(mp^{r-2}) \pmod{p^r} & \textup{ if } \left(\frac{-M}{p}\right) = -1,
\end{cases}
\end{equation}
where $p=a^2+Mb^2$, $a$ and $b$ integers, when  $\left(\frac{-M}{p}\right) = 1$.
We will see later that $4a^2-2p$ equals $\beta_3(p)$, $\gamma_3(p)$ and $\alpha_3(p)$ when $M=2,3$ and $4$ respectively.
Thus, (\ref{for_U_gamma}) gives us a congruence relation between values of $D(n)$, $C(n)$, and $A(n)$ and the Fourier coefficients of $g_3(z)$, $f_3(z)$, and $h_3(z)$ respectively, which will be central to our proofs of Theorems \ref{thm_CongCn}, \ref{thm_CongCn2} and \ref{thm_CongCn3}.


\section{Proofs}\label{sec_Proofs}

\begin{proof}[Proof of Theorem \ref{thm_CM}]
For $k$ in each equivalence class modulo $6$, we will define a Hecke character $\Psi_k$ and construct the required CM newform $f_k$, using the methodology outlined in Section~\ref{sec_Prelim_CM}.

For an ideal $\mathfrak{f} \in \mathcal{O}_K$, let $I(\mathfrak{f})$ denote the group of fractional ideals prime to $\mathfrak{f}$, and let $J(\mathfrak{f})$ be the subset of principal fractional ideals whose generator is multiplicatively congruent to $1$ modulo $\mathfrak{f}$, i.e., $J(\mathfrak{f})=\{ (\alpha) \in I(\mathfrak{f}) \mid \alpha \equiv^{\times} 1 \pmod {\mathfrak{f}} \}$.

Let $K=\mathbb{Q}(\sqrt{-3})$, which has discriminant $D=-3$ and whose ring of integers is $\mathcal{O}_K = \mathbb{Z}\left[\frac{1+ \sqrt{-3}}{2}\right]$, which is a principal ideal domain. Therefore, all fractional ideals of $K$ are also principal, and are of the form $\frac{1}{m} (\alpha)$ where $m \in \mathbb{Z} \setminus \{0\}$ and $\alpha \in \mathcal{O}_K$. 

We let $\omega := \frac{1+ \sqrt{-3}}{2}$ for brevity. In what follows, $p$ will always denote a prime. When $p\equiv 1 \pmod{6}$ we will reserve $a$ and $b$ exclusively to be the integers defined by $p=a^2+3b^2$ with $a\equiv 1 \pmod{3}$ and $b>0$. Note that $a$ and $b$ must have different parity. We also note the discussion in Section \ref{sec_Prelim_Ideals}, where we express the ideals of $\mathcal{O}_K$ which have norm $p\equiv 1\pmod 6$ in terms of $a$, $b$ and $\omega$.

Case 1: $k \equiv 1 \pmod 6$. Let $\mathfrak{f}= (1)$. Then $I(\mathfrak{f})=J(\mathfrak{f})$ is the set of all fractional ideals. We define the Hecke character $\Phi_k : I((1)) \to \mathbb{C}^{*}$ of weight $k$ and conductor $(1)$ by
$$
\Phi_k \left( \tfrac{1}{m} (\alpha) \right)
=
\left( \tfrac{\alpha}{m}  \right)^{k-1}.
$$
Therefore, 
$$
f_k(z) := \sum_{\mathfrak{a}} \Phi_k(\mathfrak{a}) q^{N(\mathfrak{a})} = \sum_{n=1}^{\infty} \gamma_k(n) q^n \in S_k(\Gamma_0(3),(\tfrac{-3}{\cdot}))
$$
is a CM newform and, for $p$ an odd prime, 
$$
\gamma_k(p) = 
\begin{cases}
(a-b+2b \omega)^{k-1} + (a+b-2b \omega)^{k-1} & \text{if $p \equiv 1 \imod 6$},\\
(-3)^\frac{k-1}{2} & \text{if $p =3$}\\
0 & \text{otherwise}.
\end{cases}
$$
Note here that $(-1+2 \omega)^2 = -3$.

Case 2: $k \equiv 4 \pmod 6$. Let $\mathfrak{f}= (1+\omega)$. Then
$$
I((1+\omega)) = \{ \tfrac{1}{m} (\alpha) \mid \alpha=x+y \omega, x+2y \not\equiv 0 \imod{3}, m \not\equiv 0 \imod{3}\}
$$
and
$$
J((1+\omega)) = \{ \tfrac{1}{m} (\alpha) \mid \alpha=x+y \omega, x+2y \equiv m \not\equiv 0 \imod{3}\}.
$$
We define the Hecke character $\Phi_k : I((1+\omega)) \to \mathbb{C}^{*}$ of weight $k$ and conductor $(1+\omega)$ by
$$
\Phi_k \left( \tfrac{1}{m} (x+y\omega) \right)
=
\left( \tfrac{x+y \omega}{m}  \right)^{k-1} \chi\left(\tfrac{1}{m}(x+y\omega)\right),
$$
where
\begin{equation}\label{def_chi}
\chi\left(\tfrac{1}{m}(x+y\omega)\right)=
\begin{cases}
+1, & \text{if } x+2y \equiv m \imod{3},\\
-1, & \text{if } x+2y \not\equiv m \imod{3}.
\end{cases} 
\end{equation}
Therefore, 
$$
f_k(z) := \sum_{\mathfrak{a}} \Phi_k(\mathfrak{a}) q^{N(\mathfrak{a})} = \sum_{n=1}^{\infty} \gamma_k(n) q^n \in S_k(\Gamma_0(9))
$$
is a CM newform, and for $p$ an odd prime, 
$$
\gamma_k(p) = 
\begin{cases}
(a-b+2b \omega)^{k-1} + (a+b-2b \omega)^{k-1} & \text{if $p \equiv 1 \imod 6$},\\
0 & \text{otherwise}.
\end{cases}
$$

Case 3: $k\equiv 3,5 \pmod{6}$. Let $\mathfrak{f}= (2)$. Then
$$
I((2)) = \{ \tfrac{1}{m} (\alpha) \mid \alpha=x+y\omega, \text{$x$ and $y$ not both even, $m$ odd} \}
$$
and
$$J((2)) = \{ \tfrac{1}{m} (\alpha) \mid \alpha=x+y\omega, \text{$x$ odd and $y$ even, $m$ odd} \}.$$
We define the Hecke character $\Phi_k : I((2)) \to \mathbb{C}^{*}$ of weight $k$ and conductor $(2)$ by
$$
\Phi_k \left( \tfrac{1}{m} (x+y\omega) \right)
=
\left( \tfrac{x+y \omega}{m}  \right)^{k-1} \psi((x+y \omega))^{k-1},
$$
where
\begin{equation}\label{def_psi}
\psi((x+y \omega))=
\begin{cases}
1 & \text{if $x$ odd, $y$ even},\\
\omega^2 & \text{if $x$ even, $y$ odd},\\
\omega^4 & \text{if $x$ odd, $y$ odd}.
\end{cases} 
\end{equation}
Therefore, 
$$
f_k(z) := \sum_{\mathfrak{a}} \Phi_k(\mathfrak{a}) q^{N(\mathfrak{a})} = \sum_{n=1}^{\infty} \gamma_k(n) q^n \in S_k(\Gamma_0(12),(\tfrac{-3}{\cdot}))
$$
is a CM newform and, for $p$ an odd prime, 
$$
\gamma_k(p) = 
\begin{cases}
(a-b+2b \omega)^{k-1} + (a+b-2b \omega)^{k-1} & \text{if $p \equiv 1 \imod 6$},\\
(-3)^\frac{k-1}{2} & \text{if $p =3$}\\
0 & \text{otherwise}.
\end{cases}
$$

Case 4: $k \equiv 0,2 \pmod 6$. Let $\mathfrak{f}= (2+2\omega)$. Then
$$
I((2+2\omega)) = \{ \tfrac{1}{m} (\alpha) \mid \alpha=x+y \omega, \text{$x$ and $y$ not both even}, x+2y \not\equiv 0 \imod{3}, \text{$m$ odd}, m \not\equiv 0 \imod{3}\}
$$
and
$$
J((2+2\omega)) =\{ \tfrac{1}{m} (\alpha) \mid \alpha=x+y \omega, \text{$x$ odd and $y$ even}, \text{$m$ odd}, x+2y \equiv m \not\equiv 0 \imod{3}\}.
$$
We define the Hecke character $\Phi_k : I((2+2\omega)) \to \mathbb{C}^{*}$ of weight $k$ and conductor $(2+2\omega)$ by
$$
\Phi_k \left( \tfrac{1}{m} (x+y\omega) \right)
=
\left( \tfrac{x+y \omega}{m}  \right)^{k-1} \chi\left(\tfrac{1}{m}(x+y\omega)\right) \psi((x+y \omega))^{k-1},
$$
where $\chi(\cdot)$ and $\psi(\cdot)$ are defined as in (\ref{def_chi}) and (\ref{def_psi}).
Therefore, 
$$
f_k(z) := \sum_{\mathfrak{a}} \Phi_k(\mathfrak{a}) q^{N(\mathfrak{a})} = \sum_{n=1}^{\infty} \gamma_k(n) q^n \in S_k(\Gamma_0(36))
$$
is a CM newform, and for $p$ an odd prime, 
$$
\gamma_k(p) = 
\begin{cases}
(a-b+2b \omega)^{k-1} + (a+b-2b \omega)^{k-1} & \text{if $p \equiv 1 \imod 6$},\\
0 & \text{otherwise}.
\end{cases}
$$
So now we have proved all but the last line of the theorem. By the multiplicative properties of the Fourier coefficients of newforms it suffices to show that $\gamma_k(p)$ is an integer for all primes $p$. This is obvious when $p \not\equiv 1 \pmod{6}$. We now examine the case when $p\equiv 1 \mod 6$. We have shown that for all $k$,
\begin{align*}
\gamma_k(p) 
&=(a+b \sqrt{-3})^{k-1} + (a-b \sqrt{-3})^{k-1}\\
&=\sum_{t=0}^{k-1} \binom{k-1}{t} a^{k-1-t} b^t  (\sqrt{-3})^t + \sum_{t=0}^{k-1} \binom{k-1}{t} a^{k-1-t}  (-1)^ t b^t  (\sqrt{-3})^t\\
&= \sum_{t=0}^{k-1} \binom{k-1}{t} a^{k-1-t} b^t   (\sqrt{-3})^t \left[ 1+  (-1)^t \right] \\
&= \sum_{\substack{t=0\\t \; even}}^{k-1} \binom{k-1}{t} a^{k-1-t} b^t (-3)^{\frac{t}{2}} ,
\end{align*}
which is an integer
\end{proof}


\begin{proof}[Proof of Corollary \ref{cor_Coeffs}]
For $p=3$ the result follows directly from the formula for $\gamma_k(3)$ in Theorem \ref{thm_CM}. When $p\equiv 5 \pmod 6$, both sides equal zero, as $\gamma_k(p)=0$ for all $k \geq 2$. 
We now assume $p \equiv 1 \pmod{6}$. Let
$\gamma^{+}(p) =a+b \sqrt{-3}$
and
$\gamma^{-}(p) =a-b \sqrt{-3}.$
From Theorem \ref{thm_CM} we get that
\begin{align*}
\gamma_k(p)^r 
&= \left( \gamma^{+}(p)^{k-1} + \gamma^{-}(p)^{k-1} \right)^r\\
&= \sum_{t=0}^{r} \binom{r}{t}  \gamma^{+}(p)^{t(k-1)} \, \gamma^{-}(p)^{(r-t)(k-1)} \\
&= \sum_{t=0}^{\lfloor \frac{r-1}{2} \rfloor} \binom{r}{t} \left( \gamma^{+}(p)^{r(k-1)} \, \gamma^{-}(p)^{(r-t)(k-1)} + \gamma^{+}(p)^{(r-t)(k-1)}  \, \gamma^{-}(p)^{r(k-1)} \right)\\
&\qquad \qquad \qquad \qquad \qquad \qquad \qquad \qquad +
\begin{cases}
\displaystyle \binom{r}{\frac{r}{2}} \left( \gamma^{+}(p) \, \gamma^{-}(p) \right) ^{\frac{r}{2}(k-1)}, & \text{if $r$ is even},\\
0, & \text{if $r$ is odd}.
\end{cases}
\end{align*}
To complete the proof, we write the summand as
\begin{equation*}
\binom{r}{t} \left( \gamma^{+}(p) \, \gamma^{-}(p) \right)^{t(k-1)} 
\left( \gamma^{-}(p)^{(r-2t)(k-1)} + \gamma^{+}(p)^{(r-2t)(k-1)} \right)
\end{equation*}
and note that 
\begin{equation*}
  \gamma^{-}(p)^{(r-2t)(k-1)} + \gamma^{+}(p)^{(r-2t)(k-1)} = \gamma_{(r-2t)(k-1)+1}(p)
\end{equation*}
and that $\gamma^{+}(p) \, \gamma^{-}(p)=p$.
\end{proof}


\begin{proof}[Proof of Corollary \ref{cor_CongCoeffs}]
This follows from reducing the result in Corollary \ref{cor_Coeffs} modulo $p^{k-1}$.
\end{proof}


\begin{proof}[Proof of Theorem \ref{thm_CongCn}]

Let $p$ be prime. When $p \equiv 1 \pmod{6}$, let $p =a^2 +3b^2 $, with $a\equiv 1 \imod 3$ and $b>0$ integers. We first show that $4a^2-2p = \gamma_3(p)$, the $p$-th Fourier coefficient of the weight three CM form $f_3(z)$ constructed in Theorem \ref{thm_CM}. From Theorem \ref{thm_CM} we get that 
\begin{align*}
\gamma_3(p) &=(a+b \sqrt{-3})^{2} +(a-b \sqrt{-3})^{2}\\
&=2a^2-6b^2\\
&=4a^2-2p.
\end{align*}
Then, taking $m=r=1$ in (\ref{for_U_gamma}), and using the facts that $U_3(1)=1$ and $\gamma_3(p)= 0$ when $p\equiv 5 \pmod{6}$, we see that for all primes $p>3$,
\begin{equation}\label{for_U3_gamma2}
U_3\left(p\right) \equiv \gamma_3(p) \pmod{p}.
\end{equation}
Substituting (\ref{for_U3_gamma2}) back into (\ref{for_U_gamma}) with $m=1$ and reducing modulo $p$ we have 
\begin{equation*}
U_3\left(p^r\right) \equiv U_3\left(p\right) U_3\left(p^{r-1} \right) \pmod{p}
\end{equation*}
for all positive integers $r$. Using a simple inductive argument we then get that for all primes $p>3$ and integers $r\geq1$,
\begin{equation}\label{for_UrU3}
U_3\left(p^r\right) \equiv U_3\left(p\right)^r \pmod{p}.
\end{equation}
Accounting for (\ref{for_U3_gamma2}) yields
\begin{equation*}
U_3\left(p^r\right) \equiv \gamma_3(p)^r \pmod{p},
\end{equation*}
which completes the proof as $U\left(p^r\right) = C\left(\frac{p^r-1}{2}\right)$ and $\gamma_3(p)^r \equiv \gamma_{2r+1}(p) \pmod{p}$ by Corollary \ref{cor_CongCoeffs}. It is worth noting also that  (\ref{for_UrU3}) gives us the nice relation
$$C\left(\frac{p^r-1}{2}\right) \equiv C\left(\frac{p-1}{2}\right)^r \pmod{p}.$$

\end{proof}


\begin{proof}[Proof of Theorem \ref{thm_CM2}]
Let $K=\mathbb{Q}(\sqrt{-2})$, which has discriminant $D=-8$ and whose ring of integers is $\mathcal{O}_K = \mathbb{Z}\left[\sqrt{-2}\right]$, which is a principal ideal domain. Therefore, all fractional ideals of $K$ are also principal, and are of the form $\frac{1}{m} (\alpha)$ where $m \in \mathbb{Z} \setminus \{0\}$ and $\alpha \in \mathcal{O}_K$. 

It is well known \cite{C} that a prime $p$ can be written as $p=c^2+2d^2$, $c$ and $d$ integers, uniquely, up to the sign of $c$ and $d$, if and only if $p=2$ or $p\equiv 1 \text{ or } 3 \pmod 8$. Let $\mathfrak{I} =  (c+d \sqrt{-2})$ be an ideal of $\mathcal{O}_K$. Then $N(\mathfrak{I}) = c^2+2d^2$. The units of $\mathcal{O}_K$ are $\pm 1$,and therefore, for a prime $p$ there are two ideals of norm $p$ when $p\equiv 1,3 \pmod{8}$ and one when $p=2$. 

Let $\mathfrak{f}= (1)$. Then $I(\mathfrak{f})=J(\mathfrak{f})$ is the set of all fractional ideals. We define the Hecke character $\Phi_k : I((1)) \to \mathbb{C}^{*}$ of weight $k$ and conductor $(1)$ by
$$
\Phi_k \left( \tfrac{1}{m} (\alpha) \right)
=
\left( \tfrac{\alpha}{m}  \right)^{k-1}.
$$
Therefore, 
$$
g_k(z) := \sum_{\mathfrak{a}} \Phi_k(\mathfrak{a}) q^{N(\mathfrak{a})} = \sum_{n=1}^{\infty} \beta_k(n) q^n \in S_k(\Gamma_0(8),(\tfrac{-8}{\cdot}))
$$
is a CM newform and, for $p$ a prime, 
$$
\beta_k(p) = 
\begin{cases}
(c+d \sqrt{-2})^{k-1} + (c-d \sqrt{-2})^{k-1} & \text{if $p \equiv 1,3 \imod 8$},\\
(-2)^\frac{k-1}{2} & \text{if $p =2$}\\
0 & \text{otherwise}.
\end{cases}
$$
We now note that when $p \equiv 1,3 \pmod 8$,
\begin{align*}
\beta_k(p) 
&=(c+d \sqrt{-2})^{k-1} + (c-d \sqrt{-2})^{k-1}\\
&= \sum_{t=0}^{k-1} \binom{k-1}{t} c^{k-1-t} d^t (\sqrt{-2})^t \left[1 + (-1)^t) \right] \\
&= \sum_{\substack{t=0\\t \; even}}^{k-1} \binom{k-1}{t} c^{k-1-t} d^t (-2)^{\frac{t}{2}} 
\end{align*}
which is an integer.
\end{proof}


\begin{proof}[Proof of Corollary \ref{cor_Coeffs2}]
Proceed along the same lines as in the proof of Corollary \ref{cor_Coeffs} but with $\beta^{+}(p) =(c+d \sqrt{-2})$
and
$\beta^{-}(p) = (c-d \sqrt{-2}).$
\end{proof}


\begin{proof}[Proof of Corollary \ref{cor_CongCoeffs2}]
This follows from reducing the result in Corollary \ref{cor_Coeffs2} modulo $p^{k-1}$.
\end{proof}


\begin{proof}[Proof of Theorem \ref{thm_CongCn2}]

Let $p$ be prime. When $p \equiv 1,3 \pmod{8}$, let $p =c^2 +3d^2 $, with $c$ and $d$ integers. 
From Theorem \ref{thm_CM2} we get that 
\begin{align*}
\beta_3(p) &= (c+d \sqrt{-2}))^{2} + (c-d \sqrt{-2})^{2}
=2a^2-4b^2
=4a^2-2p.
\end{align*}
Then, taking $m=r=1$ in (\ref{for_U_gamma}), and using the facts that $U_2(1)=1$ and $\beta_3(p)= 0$ when $p\equiv 5,7 \pmod{8}$, we see that for all primes $p>2$,
\begin{equation}\label{for_U2_gamma2}
U_2\left(p\right) \equiv \beta_3(p) \pmod{p}.
\end{equation}
Substituting (\ref{for_U2_gamma2}) back into (\ref{for_U_gamma}) with $m=1$ and reducing modulo $p$ we have 
\begin{equation*}
U_2\left(p^r\right) \equiv U_2\left(p\right) U_2\left(p^{r-1} \right) \pmod{p}
\end{equation*}
for all positive integers $r$. Using a simple inductive argument we then get that for all primes $p>2$ and integers $r\geq1$,
\begin{equation}\label{for_UrU2}
U_2\left(p^r\right) \equiv U_2\left(p\right)^r \pmod{p}.
\end{equation}
Accounting for (\ref{for_U2_gamma2}) yields
\begin{equation*}
U_2\left(p^r\right) \equiv \beta_3(p)^r \pmod{p},
\end{equation*}
which completes the proof as $U_2\left(p^r\right) = (-1)^{\frac{p^r-1}{2}} D\left(\frac{p^r-1}{2}\right)$ and $\beta_3(p)^r \equiv \beta_{2r+1}(p) \pmod{p}$ by Corollary \ref{cor_CongCoeffs2}. It is worth noting also that (\ref{for_UrU2}) gives us the nice relation
$$D\left(\frac{p^r-1}{2}\right) \equiv D\left(\frac{p-1}{2}\right)^r \pmod{p},$$
as  $(-1)^{\frac{p^r-1}{2}}= (-1)^{\frac{r(p-1)}{2}}$.
\end{proof}

\begin{proof}[Proof of Theorem \ref{thm_CongCn3}]
Following the description of $h_k(z)$ in Section \ref{sec_Intro}, we note that $\alpha_3(p)=4a^2-2p$ when $p \equiv 1 \pmod4$. Note also that, for $p\geq5$, Ahlgren's result (\ref{superapery1}) gives us
$$
(-1)^{\frac{(p-1)}{2}}  U_4(p) = A\Bigl(\frac{p-1}{2}\Bigr) \equiv \alpha_3(p) \pmod{p^2}.
$$
So, considering (\ref{for_U_gamma}) modulo $p^2$ with $m=1$ and $r\geq2$ we get that
\begin{equation*}
U_4\left(p^r\right) \equiv U_4\left(p\right) U_4\left(p^{r-1}\right) \pmod{p^2},
\end{equation*}
which holds for all $p\geq 5$, as $\alpha_3(p)=0$ when $p \equiv 3 \pmod4$, and $(-1)^{\frac{(p-1)}{2}} =1$ when $p \equiv 1 \pmod 4$.
Using a simple inductive argument we then get that for all primes $p\geq5$ and integers $r\geq2$,
\begin{equation}\label{for_UrU4}
U_2\left(p^r\right) \equiv U_2\left(p\right)^r \pmod{p^2},
\end{equation}
which implies
\begin{equation*}
(-1)^{\frac{(p^r-1)}{2}} A\Bigl(\frac{p^r-1}{2}\Bigr) \equiv \alpha_3(p)^r \pmod{p^2}.
\end{equation*}
This completes the proof as $\alpha_3(p)^r \equiv \alpha_{2r+1}(p) \pmod{p^2}$ from (\ref{for_alphap2}), and $(-1)^{\frac{(p^r-1)}{2}}=1$ when $p \equiv 1 \pmod4$. 
\end{proof}


\section*{Acknowledgements}
The second author is supported by a grant from the Simons Foundation (\#353329, Dermot McCarthy).


\vspace{6pt}

\end{document}